\newtheorem{thm}{Theorem}
\newtheorem{lem}[thm]{Lemma}
\theoremstyle{remark}
\newcommand{\R}{\ensuremath{\mathbb{R}} }
\newcommand{\ZmZ}{ \ensuremath{\mathbb{Z}/m \mathbb{Z}}}
\newcommand{\ZpZ}{ \ensuremath{\mathbb{Z}/p \mathbb{Z}}}
\newcommand{\Fptwo}{ \ensuremath{\mathbb{F}_{p^2}}}
\newcommand{\Fp}{ \ensuremath{\mathbb{F}_p}}
\newcommand{\U}{U}
\begin{document}

\title[Periods of Fibonacci sequences mod primes]{Splitting fields and periods of Fibonacci sequences modulo primes}

\author[Gupta]{Sanjai Gupta}
\address{Department of Mathematics\\ Irvine Valley College\\ 
Irvine, CA 92618}
\email{sgupta@ivc.edu}

\author[Rockstroh]{Parousia Rockstroh}
\address{Department of Mathematics\\ Simon Fraser University\\                                                  
Burnaby, British ColumbiaÊÊV5A 1S6}
\email{parousia\_rockstroh@sfu.ca}

\author[Su]{Francis Edward Su$^*$}
\address{Department of Mathematics\\ Harvey Mudd College\\
Claremont, CA 91711}
\email{su@math.hmc.edu}


\thanks{$^*$Supported in part by NSF DMS-0701308.}


\maketitle

\section{Introduction}
The Fibonacci sequence defined by $F_0=0, F_1=1, F_{n+1}=F_n+F_{n-1}$
is clearly periodic when reduced modulo an integer $m$, since there
are only finitely many possible pairs of consecutive elements chosen from
$\ZmZ$ (in fact, $m^2$ such pairs) 
and any such pair determines the rest of the sequence.  What is the
period of this sequence?  

An upper bound is $m^2-1$ (since the sequence does not have a
consecutive pair of $0$'s), but the period is often much smaller.
As examples, the Fibonacci sequence mod $11$ is:
$$
0,1,1,2,3,5,8,2,10,1,0,1,1, \ldots
$$
and has period $10$; the Fibonacci sequence mod $7$ is:
$$
0,1,1,2,3,5,1,6,0,6,6,5,4,2,6,1,0,1,1,\ldots
$$
and has period $16$.


This problem was first considered by Wall \cite{Wall} and shortly
thereafter by Robinson \cite{Robinson}.  Among other cases, they
studied the Fibonacci sequence for prime moduli, and 
showed that for primes $p \equiv 1, 4$ mod $5$ the
period length of the Fibonacci sequence mod $p$ 
divides $p-1$, while for primes 
$p \equiv 2, 3$ mod $5$, the period length divides $2(p+1)$. 
The examples above illustrate these facts.

Wall's proofs use different combinatorial techniques for each of
these classes of primes.  
Robinson proves these results by appealing to a directed
graph of points formed by multiplication by a {\em Fibonacci matrix}.
In this paper, we give alternative proofs of these
results that also use the Fibonacci matrix, but unlike Robinson, 
we place the roots of its characteristic polynomial 
in an appropriate splitting field.  This allows us to 
obtain bounds for the periods of the more general recurrence
$$
E_{n+1}= A E_n + B E_{n-1}
$$
modulo a prime, which neither Wall nor Robinson consider.

Vella and Vella \cite{Vella-Vella} consider general recurrences, but only in the special case      
where the roots of characteristic polynomial are integers (viewed as a
polynomial with real coefficients).
Using sophisticated methods, Pinch \cite{Pinch}
proves general results about multiple-term recurrences with prime power  
moduli, but does not produce specific bounds of the kind that we consider here.
Li \cite{Li} reviews prior work on period lengths of general recurrences in 
the context of a different problem: determining which residue classes appear in recurrence sequences.

The purpose of our brief paper is to illustrate an accessible, 
motivated treatment of this classical topic using only ideas from linear and abstract algebra
(rather than the case-by-case analysis found in many papers on the subject, or techniques from graduate number theory).
Our methods extend to general recurrences with prime moduli and provide some new insights, e.g., Theorem \ref{Bsquaredbound}.
And our treatment highlights a nice application of the use of splitting fields 
that might be suitable to present in undergraduate course in abstract algebra or Galois theory.

\section{Eigenvalues of the Fibonacci Matrix}

Let $p$ be an odd prime.

In accordance with previous literature \cite{Robinson,Wall} we define
$k(p)$, the {\em period} of the Fibonacci sequence mod $p$, 
to be the smallest non-zero index $i$ such that $F_i\equiv 0$ mod $p$ 
and $F_{i+1}\equiv 1$ mod $p$.  
In our examples above,
$k(11)=10$, while $k(7)=16$.
Following Robinson \cite{Robinson}, we consider the Fibonacci matrix:
$$
\U=\left[
 \begin{array}{cc} 
	1 & 1\\
	1 & 0
\end{array}
\right]. 
$$
This is a matrix over some field $F$ that we should be careful to
specify. If we choose $F=\R$, then 
$$
\U^n=
\left[ \begin{array}{cc} 
	F_{n+1} & F_n\\
	F_n & F_{n-1}
\end{array} \right].
$$
And if we choose $F=\ZpZ$ (also written $\Fp$, the finite field of order $p$)
then the entries of $\U^n$ are elements of
the Fibonacci sequence mod $p$, the desired objects of study.

It is natural to consider the eigenvalues of the matrix $\U$,
which are roots of its characteristic polynomial $x^2-x-1$.  
If eigenvalues $\lambda, \bar{\lambda}$ exist and are distinct, 
then $\U=C D C^{-1}$ where $D$ is the diagonal matrix
$$
D=
\left[ \begin{array}{cc} 
	\lambda & 0\\
	     0  & \bar{\lambda}
\end{array} \right]
$$
and $C$ is a matrix with the corresponding eigenvectors as columns.
(If the eigenvalues are not distinct, then $D$ is not diagonal but
a Jordan block and $C$ is a matrix of generalized eigenvectors.)
Then $\U^k=C D^k C^{-1}$.  We see that for $k=k(p)$,
we have $U^k=I$, the
identity matrix.  Therefore $D^k= C^{-1} \U^k C$ is also
$I$.  We observe that the exponent $k=k(p)$ is the 
smallest non-zero exponent $n$ such that $D^n=I$.  Thus:

\begin{lem}
\label{cycle-criterion}
The period $k(p)$ must divide any $n$ that satisfies $D^n=I$.
\end{lem}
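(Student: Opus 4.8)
The plan is to recognize Lemma~\ref{cycle-criterion} as nothing more than the familiar group-theoretic fact that the order of an element divides every exponent that sends it to the identity, here applied to $D$ inside the group of invertible matrices over the relevant splitting field. The discussion preceding the lemma already records the two properties of $k=k(p)$ that make this work: that $D^k=I$, and that $k$ is the \emph{smallest} nonzero exponent with $D^n=I$. (The first holds because $\U^k=I$ — which is essentially the definition of $k(p)$, since the first row of $\U^n$ is the consecutive pair $F_{n+1},F_n$ — together with $D^k=C^{-1}\U^k C$; the second because $\U^n=C D^n C^{-1}$ shows $D^n=I$ if and only if $\U^n=I$, so a nonzero $n$ with $D^n=I$ gives $F_n\equiv 0$, $F_{n+1}\equiv 1 \pmod p$ and hence $n\ge k(p)$ by minimality in the definition of $k(p)$.)

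Granting this, the argument I would give is short. Take any positive integer $n$ with $D^n=I$ and apply the division algorithm: write $n=qk+r$ with $q\ge 0$ and $0\le r<k$. Then
\[
D^r = D^{\,n-qk} = D^n \bigl(D^k\bigr)^{-q} = I\cdot I^{-q} = I,
\]
and since $k$ is the least \emph{nonzero} exponent with this property while $0\le r<k$, we are forced to conclude $r=0$, i.e.\ $k\mid n$, which is the assertion. (If one also wishes to allow $n\le 0$: the case $n=0$ is immediate, and $D^n=I$ forces $D$ invertible, so negative $n$ reduces to the positive case.)

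I do not anticipate a genuine obstacle; the only items to keep in view are bookkeeping points rather than difficulties. One is that the factorization $\U=C D C^{-1}$ is available, which uses $\lambda\ne\bar\lambda$ — and in the coincident-root case (which occurs for $p=5$) one simply lets $D$ be the Jordan block, whereupon both $\U^n=C D^n C^{-1}$ and the displayed computation go through verbatim. The other is that $D$ is invertible, which holds because $\det\U=-1\ne 0$ in $\Fp$ (equivalently, because the eigenvalues satisfy $\lambda\bar\lambda=-1$), so that the factor $\bigl(D^k\bigr)^{-q}$ above is legitimate — though this is automatic once we know $D^k=I$.
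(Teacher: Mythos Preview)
Your proposal is correct and follows exactly the line the paper intends: the paper records, in the paragraph immediately preceding the lemma, that $k(p)$ is the smallest nonzero exponent with $D^n=I$, and then states the lemma with ``Thus:'' --- leaving the standard order-divides-exponent argument implicit. You have simply written out that division-algorithm step (and added some careful remarks about the Jordan-block case and invertibility), so there is no difference in approach, only in level of detail.
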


When do the eigenvalues $\lambda, \bar{\lambda}$ exist?  
The quadratic formula
shows that $ax^2+bx+c$ has roots in the field $\ZpZ$ as long as the
discriminant $\Delta = b^2-4ac$ is a square in $\ZpZ$; 
hence the characteristic polynomial 
$x^2-x-1$ has roots in $\ZpZ$ if and only if $\Delta=5$ is a square.
Quadratic reciprocity shows that if $p$ is an odd prime,
then $5$ is a square in $\ZpZ$ if and only if 
$p\equiv 0,1,4$ mod $5$.  And as long as $p\neq 5$, the eigenvalues are
distinct, hence we recover the classical bound:

\begin{thm}
\label{thm:14mod5}
If $p$ is an odd prime and $p \equiv 1,4$ mod $5$, then $k(p)$
divides $p-1$.  In particular, $k(p)\leq p-1$.
\end{thm}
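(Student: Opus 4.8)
The plan is to combine Lemma \ref{cycle-criterion} with the cyclic structure of the multiplicative group $\Fp^\times$. First I would unpack the hypothesis: since $p \equiv 1, 4 \pmod 5$, the quadratic reciprocity computation recalled just above shows the discriminant $\Delta = 5$ is a square in $\ZpZ$, and it is a \emph{nonzero} square because $p \neq 5$. Hence the characteristic polynomial $x^2 - x - 1$ has two roots $\lambda, \bar\lambda$ in $\ZpZ$, and these roots are distinct (again using $p \neq 5$, so $\Delta \neq 0$). Moreover $\lambda \bar\lambda = -1$, the constant term, so both eigenvalues are nonzero, i.e., they are elements of $\Fp^\times$.

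With distinct eigenvalues in $\Fp$, the matrix $\U$ is diagonalizable over $\Fp$: as in the discussion preceding Lemma \ref{cycle-criterion}, $\U = C D C^{-1}$ with $D = \mathrm{diag}(\lambda, \bar\lambda)$, so $D^n = \mathrm{diag}(\lambda^n, \bar\lambda^n)$ and therefore $D^n = I$ if and only if $\lambda^n = \bar\lambda^n = 1$. Now I would invoke Fermat's little theorem (equivalently, Lagrange's theorem applied to the group $\Fp^\times$ of order $p-1$): every element of $\Fp^\times$ satisfies $x^{p-1} = 1$, so $\lambda^{p-1} = \bar\lambda^{p-1} = 1$, whence $D^{p-1} = I$. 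By Lemma \ref{cycle-criterion}, $k(p)$ divides $p-1$, and in particular $k(p) \leq p-1$.

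There is no real obstacle here; the only points requiring care are bookkeeping ones, and they are exactly the places the hypotheses enter. The congruence condition $p \equiv 1,4 \pmod 5$ is what guarantees $\lambda, \bar\lambda \in \Fp$ in the first place; the condition $p \neq 5$ is what guarantees they are distinct, which is in turn what makes the diagonalization — and hence the implication $\U^{k(p)} = I \Rightarrow D^{k(p)} = I$ underlying Lemma \ref{cycle-criterion} — valid; and the observation $\lambda \bar\lambda = -1$ is what places the eigenvalues in the multiplicative group so that Fermat applies. Once these are checked, the divisibility is immediate.
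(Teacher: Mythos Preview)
Your argument is correct and matches the paper's own proof essentially line for line: distinct nonzero eigenvalues in $\Fp$, Fermat's little theorem gives $\lambda^{p-1}=\bar\lambda^{p-1}=1$, hence $D^{p-1}=I$, and Lemma~\ref{cycle-criterion} finishes. The only cosmetic difference is that the paper cites invertibility of $\U$ for $\lambda,\bar\lambda\neq 0$ where you use $\lambda\bar\lambda=-1$; both are fine.
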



\begin{proof} 
The eigenvalues $\lambda, \bar{\lambda}$ 
of $\U$ are non-zero (since $\U$ is invertible) and distinct (since
$p\neq 5$).
Since $p$ is prime, Fermat's (little) theorem implies both $\lambda^{p-1}=1$
and $\bar{\lambda}^{p-1}=1$.  Hence $D^{p-1}=I$ and Lemma
\ref{cycle-criterion} gives the desired conclusion.
\end{proof}

When $p=5$, the eigenvalues are not distinct and $D$ is not diagonal, so
$D^4 \neq I$ even though $\lambda^4=\bar{\lambda}^4=1$.  One finds
that $D^{20}=I$ and $k(5)=20$.

\section{A Splitting Field for the Eigenvalues}

A slight modification will take care of the remaining classes of primes 
$p \equiv 2,3$ mod $5$; 
but for such $p$ the characteristic polynomial
$x^2-x-1$ will not have roots in $\Fp$ unless we enlarge the field.  

In this case, we choose $F=\Fp [x]/(x^2-x-1)$, the splitting field of
$x^2-x-1$, and consider $\U$ as a matrix with entries in $F$.  
Note that $F$ 
is isomorphic to $\Fptwo$, the finite field of order $p^2$.  It has
$\Fp$ as a subfield, namely the image of the constant polynomials in $\Fp[x]$.
The quadratic formula gives the eigenvalues of $\U$:
\begin{equation}
\label{e-values}
\lambda=2^{-1}(1+\sqrt{5}), \qquad   
\bar{\lambda}=2^{-1}(1-\sqrt{5})
\end{equation}
where $\sqrt{5}$ denotes a field element of $F$ whose square is 5
(there are two; fix one).  This element has a special property:

\begin{lem}
\label{GaloisFact}
If $p\equiv 2,3$ mod $5$, then $(\sqrt{5})^p=-\sqrt{5}$.
\end{lem}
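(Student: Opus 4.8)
The plan is to reduce the claim to a statement about quadratic residues, which has already been settled in the excerpt. Since $(\sqrt{5})^2 = 5$ and $p-1$ is even, one can write
$$
(\sqrt{5})^p = (\sqrt{5})^{p-1}\cdot\sqrt{5} = 5^{(p-1)/2}\cdot\sqrt{5},
$$
where the factor $5^{(p-1)/2}$ now involves only the element $5 \in \Fp \subseteq F$. By Euler's criterion applied in $\Fp$, the element $5^{(p-1)/2}$ equals $+1$ or $-1$ according to whether $5$ is or is not a square mod $p$. So everything comes down to deciding the quadratic residue status of $5$.

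First I would recall the quadratic reciprocity computation already used just before Theorem \ref{thm:14mod5}: the element $5$ is a square in $\Fp$ exactly when $p \equiv 0,1,4$ mod $5$. Hence for $p \equiv 2,3$ mod $5$ the element $5$ is a non-residue, so $5^{(p-1)/2} = -1$ in $\Fp$, and substituting into the display above gives $(\sqrt{5})^p = -\sqrt{5}$, as desired.

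Alternatively --- and this is the reason for the lemma's name --- one can argue via the Frobenius automorphism $\phi\colon F \to F$, $\phi(a) = a^p$, which is an automorphism of $F \cong \Fptwo$ whose fixed field is precisely $\Fp$. Applying $\phi$ to the relation $(\sqrt{5})^2 = 5$ shows that $\phi(\sqrt{5})$ is again a square root of $5$, hence equals $\pm\sqrt{5}$. It cannot be $+\sqrt{5}$, for that would place $\sqrt{5}$ in the fixed field $\Fp$, contradicting that $5$ is a non-residue when $p \equiv 2,3$ mod $5$. Therefore $\phi(\sqrt{5}) = -\sqrt{5}$, which is the claim. (The two roots $\pm\sqrt{5}$ are genuinely distinct since $p$ is odd and $\sqrt{5} \neq 0$.)

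There is no real obstacle here: the only input requiring care is the fact from quadratic reciprocity about when $5$ is a square mod $p$, and that has already been established in the excerpt. Once that is in hand, the lemma is essentially a one-line consequence of Euler's criterion (or of the defining property of Frobenius).
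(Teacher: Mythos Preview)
Your proposal is correct. Your second argument is essentially the paper's own proof: the paper also invokes the Frobenius automorphism $\sigma(\alpha)=\alpha^p$, observes it must permute the two roots of $x^2-5$, and then rules out $\sigma(\sqrt{5})=\sqrt{5}$. The only difference is in how that case is excluded. You argue that $\sigma(\sqrt{5})=\sqrt{5}$ would force $\sqrt{5}$ into the fixed field $\Fp$, contradicting that $5$ is a nonresidue; the paper instead argues that $\sigma(\sqrt{5})=\sqrt{5}$ would make $\sigma$ the identity on all of $\Fptwo=\Fp(\sqrt{5})$, and then rules this out by exhibiting a multiplicative generator $\gamma$ with $\gamma^p\neq\gamma$. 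Your version is a bit more direct and keeps the hypothesis $p\equiv 2,3\pmod 5$ visibly in play.

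Your first argument via Euler's criterion, $(\sqrt{5})^p = 5^{(p-1)/2}\sqrt{5} = -\sqrt{5}$, is a genuinely different and more elementary route: it avoids any discussion of automorphisms or fixed fields and reduces the lemma to a one-line computation once the quadratic-residue status of $5$ is known. The paper's Frobenius approach, by contrast, is chosen to highlight the Galois-theoretic viewpoint (in keeping with the paper's stated pedagogical aim), and it generalizes verbatim when $5$ is replaced by an arbitrary nonresidue $\Delta$ in the later sections---though of course so does Euler's criterion.
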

\begin{proof}
Consider the {\em Frobenius map} $\sigma: \Fptwo \rightarrow \Fptwo$ 
where $\sigma(\alpha)=\alpha^p$. It is well-established 
\cite{Dummit-Foote} that the Frobenius map is an automorphism of
$\Fptwo$ that fixes $\Fp$, hence it must permute the roots
of irreducible polynomials with coefficients in $\Fp$.

In particular, $\sigma$ permutes the roots of $x^2-5$, so
either $\sigma(\sqrt{5})=\sqrt{5}$ or $\sigma(\sqrt{5})=-\sqrt{5}$, 
i.e., either $\sigma$ fixes the entire
field $\Fptwo$ or just the subfield $\Fp$.  
But $\sigma$ is not the
identity, since the multiplicative group of $\Fptwo$ is known to be
cyclic \cite[p.314]{Dummit-Foote} of order $p^2-1$, 
so if the multiplicative generator is $\gamma$, then
$\sigma(\gamma)=\gamma^p \neq \gamma$.
Hence $\sigma(\sqrt{5})=-\sqrt{5}$, as desired.
\end{proof}

\begin{lem}
\label{Pmap}
Let $\lambda$ and $\bar{\lambda}$ be the two roots of $x^2-x-1$ in
$F=\Fptwo$. Then 
$$
\bar{\lambda}^{p+1}=\lambda^{p+1}.
$$
\end{lem}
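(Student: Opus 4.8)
The plan is to express both $\lambda^{p+1}$ and $\bar\lambda^{p+1}$ in terms of the Frobenius action on $\sqrt5$ and then invoke Lemma \ref{GaloisFact}. First I would write, using \eqref{e-values}, $\lambda^p = (2^{-1}(1+\sqrt5))^p = 2^{-p}(1+\sqrt5)^p$. The key computational fact is that the Frobenius map $\sigma(\alpha)=\alpha^p$ is a field automorphism, so it is additive: $(1+\sqrt5)^p = 1^p + (\sqrt5)^p = 1 + (\sqrt5)^p$, and also $2^{-p} = (2^{-1})^p = 2^{-1}$ since $2\in\Fp$ is fixed by Frobenius (equivalently $2^{p-1}=1$ by Fermat). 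Hence $\lambda^p = 2^{-1}(1+(\sqrt5)^p)$. By Lemma \ref{GaloisFact}, $(\sqrt5)^p = -\sqrt5$ when $p\equiv 2,3$ mod $5$, so $\lambda^p = 2^{-1}(1-\sqrt5) = \bar\lambda$. By the symmetric computation (swapping the sign of $\sqrt5$), $\bar\lambda^p = \lambda$.

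Next I would multiply: $\lambda^{p+1} = \lambda^p\cdot\lambda = \bar\lambda\,\lambda$ and likewise $\bar\lambda^{p+1} = \bar\lambda^p\cdot\bar\lambda = \lambda\,\bar\lambda$. Since multiplication in $F$ is commutative, $\bar\lambda\,\lambda = \lambda\,\bar\lambda$, which gives $\lambda^{p+1} = \bar\lambda^{p+1}$, as desired. (One can even name the common value: it is $\lambda\bar\lambda = -1$, the constant term of $x^2-x-1$, though this is not needed for the statement.)

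The only genuine subtlety — the part I would be most careful about — is the use of additivity of the $p$-th power map. This is exactly the "freshman's dream" in characteristic $p$: $(a+b)^p = a^p + b^p$ because $p$ divides every intermediate binomial coefficient $\binom{p}{i}$ for $0<i<p$. In our setting it comes for free, since Lemma \ref{GaloisFact}'s proof already establishes that $\sigma$ is a ring automorphism of $\Fptwo$; I would simply cite that. Everything else — $2^{-1}\in\Fp$ is Frobenius-fixed, and commutativity of the field — is routine. So the proof is essentially a two-line calculation once Lemmas \ref{GaloisFact} and the automorphism property of Frobenius are in hand.
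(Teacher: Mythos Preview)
Your proof is correct and uses the same ingredients as the paper's---the freshman's dream $(a+b)^p=a^p+b^p$, Fermat's theorem for $2^{-1}\in\Fp$, and Lemma~\ref{GaloisFact}---but your organization is cleaner. The paper first writes $\bar\lambda=1-\lambda$, expands $\bar\lambda^{p+1}=(1-\lambda)^p(1-\lambda)=1-\lambda-\lambda^p+\lambda^{p+1}$, and then proves the claim $1-\lambda-\lambda^p=0$ by substituting the explicit formula~\eqref{e-values}. You instead go straight to the conceptual heart: Frobenius swaps the two roots, i.e.\ $\lambda^p=\bar\lambda$ and $\bar\lambda^p=\lambda$, so $\lambda^{p+1}=\lambda\bar\lambda=\bar\lambda^{p+1}$. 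The paper's claim $1-\lambda-\lambda^p=0$ is of course equivalent to your $\lambda^p=\bar\lambda$, so the two arguments are really the same computation arranged differently; your version has the advantage of making the Galois-theoretic picture (Frobenius permuting conjugate roots) explicit, and it immediately yields the common value $\lambda^{p+1}=\lambda\bar\lambda=-1$, which the paper only uses later in Theorem~\ref{thm:23mod5}.
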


\begin{proof}

We make frequent use of the following fact \cite[p.548]{Dummit-Foote}:
if $a,b\in \mathbb{F}_{p^2}$, then $(a+b)^p=a^p+b^p$. This follows
from the binomial theorem, noting that ${p \choose n}$ is divisible by $p$ 
if $p$ is prime and $n$ is not $0$ or $p$.

From (\ref{e-values}), note that $\bar{\lambda}=1-\lambda$.  Then 
$
\bar{\lambda}^{p+1}
=(1-\lambda)^{p+1}
=(1-\lambda)^p (1-\lambda)
=(1-\lambda^p) (1-\lambda)
=1-\lambda-\lambda^p+\lambda^{p+1}.
$
The desired result then follows from 
this claim: that $1-\lambda-\lambda^p=0$.  
Substituting (\ref{e-values}),
and using $(1+\sqrt{5})^p=1+\sqrt{5}^p$, we find:
$$
1-\lambda-\lambda^p =
1- 2^{-1}(1+\sqrt{5})-(2^{-1})^p (1+\sqrt{5}^p). 
$$
But since $2^{-1}$ is in $\Fp$, by Fermat's theorem, 
$\left(2^{-1}\right)^{p}=2^{-1}$, and the claim follows
from Lemma \ref{GaloisFact} and $1- 2 (2^{-1})=0$.
\end{proof}

Now we have enough to determine the desired bound:

\begin{thm}
\label{thm:23mod5}
Let $p$ be an odd prime with $p \equiv 2,3$ mod $5$ then 
$k(p)$ divides $2(p+1)$.  In particular, $k(p) \le 2(p+1)$.
\end{thm}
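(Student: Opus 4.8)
The plan is to mimic the structure of Theorem~\ref{thm:14mod5}, using Lemma~\ref{cycle-criterion} as the engine: it suffices to exhibit an exponent $n$ with $D^n = I$, i.e., with $\lambda^n = 1$ and $\bar{\lambda}^n = 1$, and then conclude that $k(p)$ divides $n$. Since $p \equiv 2,3 \bmod 5$, the eigenvalues live in $F \cong \Fptwo$ but not in $\Fp$, and they are distinct (as $p \neq 5$) and non-zero (as $\U$ is invertible), so $D$ is genuinely diagonal and the reduction to $\lambda^n = \bar{\lambda}^n = 1$ is legitimate. The natural candidate exponent is $n = 2(p+1)$.

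First I would compute $\lambda^{p+1}$ explicitly. By Lemma~\ref{Pmap} we have $\bar{\lambda}^{p+1} = \lambda^{p+1}$, and since $\lambda \bar{\lambda} = -1$ (the constant term of $x^2 - x - 1$), multiplying gives $(\lambda \bar{\lambda})^{p+1} = (-1)^{p+1} = 1$ because $p$ is odd, so $\lambda^{2(p+1)} = \lambda^{p+1}\bar{\lambda}^{p+1} = 1$. The same computation gives $\bar{\lambda}^{2(p+1)} = 1$. Hence $D^{2(p+1)} = I$, and Lemma~\ref{cycle-criterion} yields that $k(p)$ divides $2(p+1)$, and in particular $k(p) \le 2(p+1)$.

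The only subtle point — the step I'd flag as the main thing to get right rather than a genuine obstacle — is the use of $\lambda\bar{\lambda} = -1$ together with the parity of $p$ to collapse $(\lambda\bar\lambda)^{p+1}$ to $1$; if one instead tried $n = p+1$ directly one would get $\lambda^{p+1}\bar\lambda^{p+1} = (-1)^{p+1} = 1$ but no control over $\lambda^{p+1}$ individually, so the doubling to $2(p+1)$ is essential and is exactly what Lemma~\ref{Pmap} is tailored to supply. Everything else is a direct invocation of results already established in the excerpt, so the proof should be just a few lines.
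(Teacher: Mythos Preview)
Your proposal is correct and follows essentially the same route as the paper: use $\lambda\bar\lambda=-1$ together with Lemma~\ref{Pmap} to deduce $\lambda^{2(p+1)}=\lambda^{p+1}\bar\lambda^{p+1}=(-1)^{p+1}=1$, and then invoke Lemma~\ref{cycle-criterion}. The paper's proof is line-for-line the same argument.
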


\begin{proof}
Recall that $\lambda$ and $\bar{\lambda}$ are roots of $x^2-x-1$.
Note that $\lambda\bar{\lambda}=-1$, and the above lemma shows that 
$\lambda^{p+1}=\bar{\lambda}^{p+1}$, so
$$
\lambda^{p+1} \lambda^{p+1} = \lambda^{p+1} \bar{\lambda}^{p+1}
= (-1)^{p+1}.
$$
Since $p$ is odd, $(-1)^{p+1} = 1$, so
$\lambda^{2(p+1)}=1$.  By Lemma
\ref{Pmap} we also have 
$\bar{\lambda}^{2(p+1)}=1$.
The result follows from Lemma \ref{cycle-criterion}.
\end{proof}

As Wall \cite{Wall} notes, the upper bounds of Theorems \ref{thm:14mod5} and
\ref{thm:23mod5} are tight for many small odd primes $p\neq 5$ (for
$p<100$, the only exceptions are 29, 47, and 89).  The bounds appear to be 
less tight for larger $p$.
Wall also shows for prime powers, $k(p^t) \leq p^{t-1}k(p)$ with
equality if $k(p^2)\neq k(p)$.  It is believed the latter condition
always holds; see \cite{Mamangakis} for partial results.
Combining knowledge of $k(p^t)$ with the fact
that $\mbox{lcm}[k(m),k(n)] = k(\mbox{lcm}[m,n])$, one can obtain a
bound on $k(m)$ for each positive integer $m$.

\section{The General Recurrence}

Our methods can be adapted to obtain bounds for the period of 
the general recurrence 
$$
E_{n+1}= A E_n + B E_{n-1}
$$
modulo a prime $p$, with $E_0=0$ and $E_1=1$.  
Let $k_{A,B}(p)$ be the period of $E_n$ mod $p$.  The Fibonacci matrix becomes 
$$
\U=\left[
 \begin{array}{cc} 
	A & B\\
	1 & 0
\end{array}
\right],
$$
and the eigenvalues $\lambda, \bar{\lambda}$ are roots of the 
characteristic polynomial $x^2-Ax-B$.  This has roots in $\ZpZ$ as long
as the discriminant $\Delta=A^2+4B$ is a square in $\ZpZ$ (a 
{\em quadratic residue} mod $p$), 
and they are distinct if $\Delta \not\equiv 0$ mod $p$.
The same arguments as in Theorem \ref{thm:14mod5} 
will yield:
\begin{thm}
If $p$ is an odd prime
and $\Delta$ is a non-zero quadratic residue mod $p$, then
$k_{A,B}(p)$ divides $p-1$.  In particular $k_{A,B}(p)\leq p-1$.
\end{thm}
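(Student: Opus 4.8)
The plan is to run the argument of Theorem~\ref{thm:14mod5} essentially verbatim, pausing only to confirm that each ingredient was established with enough generality. Since $p$ is odd, $2$ is invertible mod~$p$, so the quadratic formula applies to the characteristic polynomial $x^2 - Ax - B$: as $\Delta = A^2 + 4B$ is a quadratic residue, its roots $\lambda, \bar\lambda = 2^{-1}(A \pm \sqrt{\Delta})$ lie in $\Fp$, and as $\Delta \not\equiv 0$ they are distinct. Hence $\U$ is diagonalizable over $\Fp$, say $\U = CDC^{-1}$ with $D$ the diagonal matrix having diagonal entries $\lambda$ and $\bar\lambda$.

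Next I would check that Lemma~\ref{cycle-criterion} and the discussion preceding it carry over. That discussion used only that $\U^{k_{A,B}(p)} = I$ together with $\U = CDC^{-1}$; here $\U^n = I$ is equivalent to $E_n \equiv 0$ and $E_{n+1} \equiv 1 \pmod p$, since the left column of $\U^n$ is $(E_{n+1}, E_n)^{\mathsf T}$ and the right column is then forced, so $\U^n = I$ is exactly the period condition. (In particular $\U$ must be invertible for $k_{A,B}(p)$ to be defined at all, that is, $B \not\equiv 0 \pmod p$; consequently $\lambda\bar\lambda = -B$ shows both eigenvalues are nonzero.) So $k_{A,B}(p)$ divides every $n$ with $D^n = I$. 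Finally, applying Fermat's little theorem to the nonzero elements $\lambda, \bar\lambda$ of $\Fp$ gives $\lambda^{p-1} = \bar\lambda^{p-1} = 1$, whence $D^{p-1} = I$ and $k_{A,B}(p) \mid p - 1$.

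I do not expect a genuine obstacle: this is the proof of Theorem~\ref{thm:14mod5} with $1, 1$ replaced by $A, B$ and the discriminant $5$ replaced by $\Delta$. The one point deserving care is the hypothesis itself, which plays the combined role that ``$\U$ invertible'' and ``$p \neq 5$'' played in the Fibonacci case: it excludes a zero eigenvalue ($B \equiv 0$) and a repeated eigenvalue ($\Delta \equiv 0$), the latter being the analogue of $p = 5$, where $D$ degenerates to a nondiagonalizable Jordan block and the bound $p - 1$ can fail.
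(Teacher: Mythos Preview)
Your proof is correct and follows exactly the approach the paper indicates: the paper does not give a separate proof but simply states that ``the same arguments as in Theorem~\ref{thm:14mod5} will yield'' this result, and you have carried out precisely those arguments with $A,B,\Delta$ in place of $1,1,5$. Your added care in verifying that $\U^n=I$ is equivalent to the period condition, and in noting the implicit assumption $B\not\equiv 0$ (needed for $k_{A,B}(p)$ to exist and for the eigenvalues to be nonzero), goes slightly beyond what the paper spells out but is entirely appropriate.
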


For example, consider $E_{n+1}= 3 E_n + 2 E_{n-1} \mod 13$.  Then 
$A=3$, $B=2$, and $\Delta=17$.  Since $\Delta \equiv 2^2$ mod $13$, 
$\Delta$ is a non-zero quadratic residue mod $13$.  Our theorem shows that
$k_{3,2}(13) \leq 12$ (and, in fact, it is 12).

A curious consequence of our theorem is that the sequence 
$E_{n+1}=  E_n + 2 E_{n-1} \mod p$ has small period (that divides
$p-1$) for {\em every} odd prime $p$ except $3$ 
(since $\Delta=3^2$ is always a square, the only prime $p$ 
dividing $\Delta$ is $3$).

If the discriminant $\Delta$ is not a square in $\ZpZ$, we consider $U$ as
a matrix with entries from the splitting field of $x^2-Ax-B$, isomorphic
to $\Fptwo$ as before.  The proof of Lemma \ref{GaloisFact} holds
with $\sqrt{5}$ replaced by $\sqrt{\Delta}$ and noting that $\sigma$
permutes the roots of $x^2-\Delta$.  Thus:
\begin{lem}
If $\Delta$ is a quadratic nonresidue mod $p$, then 
$(\sqrt{\Delta})^p=-\sqrt{\Delta}$.
\end{lem}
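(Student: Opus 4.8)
The plan is to mimic the proof of Lemma \ref{GaloisFact} essentially verbatim, with $\sqrt{5}$ replaced by $\sqrt{\Delta}$. First I would recall the setup: since $\Delta$ is a quadratic nonresidue mod $p$, the polynomial $x^2-\Delta$ has no root in $\mathbb{F}_p$, hence is irreducible over $\mathbb{F}_p$, and a fixed square root $\sqrt{\Delta}$ lies in the splitting field $F\cong\mathbb{F}_{p^2}$ but \emph{not} in the subfield $\mathbb{F}_p$. I would also note that $\sqrt{\Delta}\neq-\sqrt{\Delta}$, since $\Delta\not\equiv 0$ and $p$ is odd, so $2\sqrt{\Delta}\neq 0$; thus $\pm\sqrt{\Delta}$ really are the two distinct roots of $x^2-\Delta$.

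Next I would introduce the Frobenius map $\sigma\colon\mathbb{F}_{p^2}\to\mathbb{F}_{p^2}$ given by $\sigma(\alpha)=\alpha^p$, which by the cited facts from \cite{Dummit-Foote} is a field automorphism fixing $\mathbb{F}_p$ pointwise. Being an automorphism that fixes $\mathbb{F}_p$, it permutes the roots of the $\mathbb{F}_p$-polynomial $x^2-\Delta$, namely $\sqrt{\Delta}$ and $-\sqrt{\Delta}$. Hence either $\sigma(\sqrt{\Delta})=\sqrt{\Delta}$ or $\sigma(\sqrt{\Delta})=-\sqrt{\Delta}$, and it remains only to rule out the former.

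To do this, I would observe that the set of elements of $\mathbb{F}_{p^2}$ fixed by $\sigma$ is precisely $\mathbb{F}_p$: these are exactly the solutions in $\mathbb{F}_{p^2}$ of $\alpha^p=\alpha$, a polynomial equation of degree $p$, and the $p$ elements of $\mathbb{F}_p$ already satisfy it by Fermat's little theorem, so there are no others. (Equivalently, one can reuse the cyclic-generator argument of Lemma \ref{GaloisFact} to see $\sigma\neq\mathrm{id}$, together with the fact that $\mathbb{F}_p$ is the fixed field of $\mathrm{Gal}(\mathbb{F}_{p^2}/\mathbb{F}_p)=\langle\sigma\rangle$.) Since $\sqrt{\Delta}\notin\mathbb{F}_p$, it cannot be fixed by $\sigma$, which forces $\sigma(\sqrt{\Delta})=-\sqrt{\Delta}$, i.e. $(\sqrt{\Delta})^p=-\sqrt{\Delta}$, as claimed.

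There is essentially no real obstacle here, since the argument is a direct transcription of the earlier proof; the one point meriting care is the identification of the fixed set of $\sigma$ with $\mathbb{F}_p$ (equivalently, that $\sigma$ is nontrivial on $F$), and this is exactly where the hypothesis that $\Delta$ is a \emph{nonresidue} — so that $\sqrt{\Delta}\notin\mathbb{F}_p$ — does all the work.
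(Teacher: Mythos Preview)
Your proposal is correct and follows essentially the same approach as the paper, which simply says that the proof of Lemma~\ref{GaloisFact} goes through verbatim with $\sqrt{5}$ replaced by $\sqrt{\Delta}$ and $\sigma$ permuting the roots of $x^2-\Delta$. Your only minor deviation is in ruling out $\sigma(\sqrt{\Delta})=\sqrt{\Delta}$: the paper argues that $\sigma\neq\mathrm{id}$ via a cyclic generator of $\mathbb{F}_{p^2}^\times$, whereas you first identify the fixed field of $\sigma$ as $\mathbb{F}_p$ by counting roots of $\alpha^p=\alpha$---but you explicitly note the cyclic-generator argument as an equivalent alternative, so the two proofs are effectively the same.
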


The analogue of Lemma \ref{Pmap} still holds:
\begin{lem}
Let $\lambda$ and $\bar{\lambda}$ be the two roots of $x^2-Ax-B$ in
$F=\Fptwo$. Then
$$
\bar{\lambda}^{p+1}=\lambda^{p+1}.
$$
\end{lem}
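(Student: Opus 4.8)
The plan is to mimic the proof of Lemma \ref{Pmap} almost verbatim, with $A$, $B$ in place of the implicit coefficients $1$, $1$. The key structural fact used there was that $\bar\lambda = 1 - \lambda$; here the roots of $x^2 - Ax - B$ satisfy $\lambda + \bar\lambda = A$, so $\bar\lambda = A - \lambda$. First I would write
\begin{equation*}
\bar\lambda^{p+1} = (A-\lambda)^{p+1} = (A-\lambda)^p (A-\lambda) = (A^p - \lambda^p)(A-\lambda),
\end{equation*}
using the Freshman's-dream identity $(a+b)^p = a^p + b^p$ in $\Fptwo$ (valid since $\binom{p}{n} \equiv 0$ for $0 < n < p$), exactly as quoted from Dummit--Foote in the proof of Lemma \ref{Pmap}. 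Since $A \in \Fp$, Fermat's little theorem gives $A^p = A$, so this becomes $(A - \lambda^p)(A - \lambda) = A^2 - A\lambda - A\lambda^p + \lambda^{p+1}$.

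Next I would reduce the desired equality $\bar\lambda^{p+1} = \lambda^{p+1}$ to the claim $A^2 - A\lambda - A\lambda^p = 0$, i.e. (dividing by $A$ when $A \not\equiv 0$) to $A - \lambda - \lambda^p = 0$, the exact analogue of the claim "$1 - \lambda - \lambda^p = 0$" in Lemma \ref{Pmap}. To verify it I would substitute the quadratic-formula expression $\lambda = 2^{-1}(A + \sqrt{\Delta})$, apply $(A + \sqrt{\Delta})^p = A^p + (\sqrt{\Delta})^p = A - \sqrt{\Delta}$ using Fermat and the preceding lemma (that $(\sqrt{\Delta})^p = -\sqrt{\Delta}$ for $\Delta$ a quadratic nonresidue), use $(2^{-1})^p = 2^{-1}$ since $2^{-1} \in \Fp$, and observe that
\begin{equation*}
A - \lambda - \lambda^p = A - 2^{-1}(A + \sqrt{\Delta}) - 2^{-1}(A - \sqrt{\Delta}) = A - 2 \cdot 2^{-1} A = 0.
\end{equation*}
This closes the argument.

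The only genuine wrinkle — and the step I'd flag as needing a word of care — is the case $A \equiv 0 \pmod p$, where one cannot divide by $A$. But then the claim $A^2 - A\lambda - A\lambda^p = 0$ is trivially true since every term has a factor $A$, so $\bar\lambda^{p+1} = \lambda^{p+1}$ still follows immediately from the displayed expansion; alternatively, when $A = 0$ the polynomial is $x^2 - B$ and the two roots are $\pm\sqrt{B}$, for which the identity is transparent. So there is no real obstacle: the proof is a routine transcription of Lemma \ref{Pmap}, and I would simply remark that the earlier argument "goes through verbatim with $A$ replacing $1$ and $\sqrt{\Delta}$ replacing $\sqrt{5}$," handling the $A \equiv 0$ subtlety in a parenthetical.
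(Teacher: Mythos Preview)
Your proof is correct and follows essentially the same route as the paper's sketch: expand $(A-\lambda)^{p+1}$ via the Frobenius identity, reduce to the claim $A-\lambda-\lambda^p=0$, and verify it using $\lambda=2^{-1}(A+\sqrt{\Delta})$ together with $(\sqrt{\Delta})^p=-\sqrt{\Delta}$ (the paper in fact writes ``$1-\lambda-\lambda^p=0$,'' evidently a slip for $A-\lambda-\lambda^p$). Your explicit handling of the case $A\equiv 0\pmod p$ is a nice point of care that the paper does not mention.
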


This follows by a similar argument as in Lemma \ref{Pmap}, noting that 
$\bar{\lambda}=A-\lambda$, and 
$\bar{\lambda}^{p+1}=A(1-\lambda-\lambda^p) + \lambda^{p+1}$.
Thus it suffices to show, as before, that 
$1-\lambda-\lambda^p =0$. 
The same arguments hold, with $\sqrt{5}$ replaced by $\sqrt{\Delta}$. 

\begin{thm}
\label{Bsquaredbound}
If $\Delta$ is a quadratic nonresidue mod $p$, then
$k_{A,B}(p)$ divides $2(p+1)\cdot\mbox{ord}(B^2)$, where
$\mbox{ord}(n)$ is the multiplicative order of $n$.
In particular, $$k_{A,B}(p) \le 2(p+1)\cdot\mbox{ord}(B^2).$$
\end{thm}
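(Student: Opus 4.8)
The plan is to mimic the proof of Theorem~\ref{thm:23mod5}, but to track the obstruction caused by the fact that now $\lambda\bar\lambda = -B$ rather than $-1$. We work in $F = \Fptwo$, the splitting field of $x^2 - Ax - B$, and we again need the criterion that $k_{A,B}(p)$ divides any $n$ with $D^n = I$ (this requires knowing $D$ is genuinely diagonal, i.e.\ $\Delta \not\equiv 0$, which holds since a nonresidue is nonzero; Lemma~\ref{cycle-criterion} and its derivation carry over verbatim with the new $\U$ once one checks $\U^n$ still has the $E$-sequence entries and $\U^{k_{A,B}(p)} = I$).

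First I would record $\lambda\bar\lambda = -B$ from the product of roots of $x^2 - Ax - B$. Then, using the analogue of Lemma~\ref{Pmap} for the general recurrence (stated just above the theorem), $\lambda^{p+1} = \bar\lambda^{p+1}$, so
$$
\lambda^{2(p+1)} = \lambda^{p+1}\bar\lambda^{p+1} = (\lambda\bar\lambda)^{p+1} = (-B)^{p+1} = B^{p+1},
$$
since $p$ is odd makes $(-1)^{p+1} = 1$. Now $B^{p+1} = B^{p-1}\cdot B^2 = B^2$ by Fermat's little theorem (as $B \in \Fp^\times$; note $B \ne 0$ since otherwise the recurrence degenerates and $\Delta = A^2$ would be a square). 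Hence $\lambda^{2(p+1)} = B^2$. Raising to the power $\mathrm{ord}(B^2)$ gives $\lambda^{2(p+1)\mathrm{ord}(B^2)} = 1$, and the same for $\bar\lambda$ by the symmetry $\lambda^{p+1} = \bar\lambda^{p+1}$ (or by repeating the argument). Therefore $D^{2(p+1)\mathrm{ord}(B^2)} = I$, and Lemma~\ref{cycle-criterion} finishes the proof.

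The one genuinely new ingredient — and the only place that could trip one up — is the step $\lambda^{2(p+1)} = B^2$ and the realization that, unlike in the Fibonacci case where $B = -1$ so $B^2 = 1$ and the order factor is trivial, here $B^2$ need not be $1$ in $\Fp$; one cannot conclude $\lambda^{2(p+1)} = 1$ but only that $\lambda^{2(p+1)}$ lies in the cyclic subgroup generated by $B^2$, forcing the extra factor $\mathrm{ord}(B^2)$. I would also double-check the edge cases: $B \ne 0$ (handled above) and $\Delta \not\equiv 0 \pmod p$ (automatic for a nonresidue), so that the eigenvalues are distinct and nonzero and the diagonalization argument behind Lemma~\ref{cycle-criterion} applies. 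Everything else is a routine transcription of the $p \equiv 2,3 \pmod 5$ argument with $5$ replaced by $\Delta$.
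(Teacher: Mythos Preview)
Your proposal is correct and follows essentially the same route as the paper: use the generalized Lemma~\ref{Pmap} to get $\lambda^{p+1}=\bar\lambda^{p+1}$, multiply to obtain $\lambda^{2(p+1)}=(\lambda\bar\lambda)^{p+1}=(-B)^{p+1}=B^2$ via Fermat, then raise to $\mathrm{ord}(B^2)$ and invoke Lemma~\ref{cycle-criterion}. Your sign $\lambda\bar\lambda=-B$ is the right one (the paper's text reads $\lambda\bar\lambda=B$, evidently a slip, though its next equality $(-B)^{p+1}$ is consistent with yours), and your explicit checks that $B\ne 0$ and $\Delta\not\equiv 0$ so that diagonalization applies are welcome additions.
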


The multiplicative order of $n$ is the smallest positive integer $t$ 
such that $n^t\equiv 1$ mod $p$.
The proof follows the proof of Theorem \ref{thm:23mod5} by noting
$\lambda \bar{\lambda}=B$, 
and hence $\lambda^{2(p+1)} = (-B)^{p+1} = B^2$ by Fermat's theorem.

Note that if $B=1$, then the original bound $2(p+1)$ still holds.  
For example, consider $E_{n+1}= 3 E_n + E_{n-1}$ mod $19$.
Then $A=3$, $B=1$, and $\Delta=13$.  
Since $13$ is a nonresidue mod $19$, 
our theorem shows $k_{3,1}(19)$ divides $40$ (and, in fact, is $40$).  
For the same sequence mod $11$, we find that $13$ is a nonresidue mod
$11$, so $k_{3,1}(11)$ divides $2(11+1)=24$
(and, in fact, is $8$).

For a general example where $B\neq 1$, consider
 $E_{n+1}= 3 E_n + 2 E_{n-1} \mod 7$.
Then $A=3$, $B=2$, and $\Delta=17$.  
Since $17$ is a nonresidue mod $7$, and $B^2=4$ satisfies $4^3 \equiv
1$ mod $7$, 
our theorem shows that the period $k_{3,2}$ divides $2(7+1)\cdot 3=48$ 
(and, in fact, is 48).
  
In general, we note that $ord(B^2)$ is at most $(p-1)/2$ by Fermat's
theorem, so 
the bound in Theorem \ref{Bsquaredbound} could be as high as
$2(p+1)(p-1)/2 = p^2-1$, the bound at the beginning of this paper.
This bound is actually achieved by $E_{n+1}= 3 E_n + 2 E_{n-1}$ mod
$37$.  This sequence begins
$$
0, 1, 3, 11, 2, 28, 14, 24, 26, 15, 23, 25, 10, 6, 1, 15, 10, 23, 15, 17, \dots
$$
and has period $1368=(37+1)(37-1)$, indicating that all
possible consecutive pairs other than $0, 0$ appear in this Fibonacci sequence mod $37$.



\end{document}